\documentclass[11pt,a4paper]{article}

\usepackage[T1]{fontenc}
\usepackage[utf8]{inputenc}
\usepackage{lmodern}
\usepackage[breaklinks]{hyperref}
\usepackage{amsmath}
\usepackage{mathtools}
\usepackage{amsfonts}
\usepackage{amssymb}
\usepackage{amsthm}
\usepackage{enumitem}
\usepackage{tikz}
\usetikzlibrary{angles,quotes,arrows.meta,calc,intersections}

\theoremstyle{definition}
\newtheorem{thm}{Theorem}[section]
\newtheorem{lem}[thm]{Lemma}

\newtheorem{cor}[thm]{Corollary}

\theoremstyle{remark}
\newtheorem{rem}[thm]{Remark}

\numberwithin{equation}{section}

\newcommand{\MG}{\underset{\sim}{\mathcal{M}}}
\newcommand{\PerpMark}[5]{%
  \coordinate (PM) at ($(#1)!#3!(#2)$);
  \draw[#5]
    ($(PM)!#4!90:(#2)$) --
    ($(PM)!#4!-90:(#2)$);
}
\title{On the Reconstruction of SAS from Other Triangle Congruence Criteria \\
\large Part II: Eliminating the Pons Asinorum}
\author{Roberto Volpe}
\date{September 18, 2026}

\begin{document}
\maketitle

\begin{abstract}
In the first part of this work we showed that, within a Hilbert plane deprived of the
Side-Angle-Side axiom, the Side-Angle-Angle criterion, together with a ray
correspondence principle [\textbf{RCT}], the existence of angle bisectors [\textbf{AB}], the
congruence of supplements of congruent angles [\textbf{SA}], and the Pons Asinorum
[\textbf{PA}], suffices to reconstruct SAS. We left open the question of whether
[\textbf{PA}] is genuinely required alongside the other three principles, noting only a
qualitative asymmetry in the nature of the principles involved. In this second part we
answer this question: we show that
\begin{equation*}
\textrm{SAA},\ [\textbf{RCT}],\ [\textbf{AB}] \;\vdash\; [\textbf{PA}],
\end{equation*}
so that [\textbf{PA}] is redundant among the hypotheses of our main theorem,
which improves to
\begin{equation*}
\textrm{SAA},\ [\textbf{RCT}],\ [\textbf{AB}],\ [\textbf{SA}] \;\vdash\; \textrm{SAS}.
\end{equation*}
The proof adapts an argument recently given by Donnelly, who reconstructs SAS from SAA
together with an angle addition axiom and the existence of angle bisectors.
\end{abstract}

\section{Introduction}\label{PEN2_s1}

In the first part of this work \cite{Volpe2026I} we investigated, within the axiomatic
system $\MG^{-}$ (a Hilbert plane deprived of the SAS axiom [\textbf{C}6]), which combinations of
the classical triangle congruence criteria and auxiliary principles suffice to reconstruct
SAS. Among our results, we established that
\begin{equation}\label{PEN2_ESAA_OLD}
\textrm{SAA},\ [\textbf{RCT}],\ [\textbf{AB}],\ [\textbf{SA}],\ [\textbf{PA}] \;\vdash\; \textrm{SAS}
\end{equation}
in $\MG^{-}$, where [\textbf{RCT}] denotes a ray correspondence principle corresponding to
Theorem 13 of Hilbert's \emph{Grundlagen der Geometrie} \cite{Hilbert1950}, and [\textbf{PA}] denotes the Pons
Asinorum.

In the concluding section of that work we observed a structural asymmetry between the two
reconstructions obtained there, from SSS and from SAA respectively, and we noted explicitly
that we could not settle the question of whether [\textbf{PA}] was truly indispensable
alongside the other hypotheses in \eqref{PEN2_ESAA_OLD}:
\begin{quote}
\emph{``Although we cannot rule out the existence in $\MG^{-}$ of the two synthetic proofs}
SSS\emph{,} [\textbf{RCT}], [\textbf{MS}] \emph{$\vdash$} [\textbf{HA}]\emph{, and}
SAA\emph{,} [\textbf{RCT}], [\textbf{AB}], [\textbf{SA}] \emph{$\vdash$}
[\textbf{PA}]\emph{, the different nature of the principles involved can provide a
qualitative indication of their deductive strength [...] while not allowing us to establish
a formal relation of greater or lesser deductive power.''}
\end{quote}
The present paper settles this question for the second of the two conjectural proofs: we
show that [\textbf{PA}] does follow synthetically from SAA, [\textbf{RCT}], and [\textbf{AB}]
alone, without appeal to continuity or to any principle beyond those already available in
$\MG^{-}$.

The route to this result came from an unplanned direction. After the preprint of
\cite{Volpe2026I} had already been submitted, we became aware of a recent paper by Donnelly
\cite{Donnelly2025}, published shortly before, which addresses a closely related question in
a different axiomatic framework: starting from $\MG^{-}$, Donnelly removes SAS and replaces
it with three new primitive axioms -- Side-Angle-Angle (N1), an angle addition axiom (N2),
and the existence of angle bisectors (N3) -- and reconstructs SAS from these. Among the
intermediate results of that reconstruction, Donnelly proves the Pons Asinorum from N1, N2,
and N3.

A comparison between the two reconstructions is instructive. Both start from the same
criterion, SAA, and share the assumption of angle bisectors ([\textbf{AB}] and N3 coincide).
The difference lies in the second ingredient: where our own reconstruction uses
[\textbf{RCT}], the reconstruction of \cite{Donnelly2025} uses the angle addition axiom N2.
As we detail in Section~\ref{PEN2_s2}, N2 is a substantially stronger assumption than
[\textbf{RCT}]: on the shared basis of SAA and [\textbf{AB}], N2 already implies
[\textbf{PA}] (this is essentially the result of \cite{Donnelly2025} itself), while whether
[\textbf{RCT}] does so was, until now, an open question -- precisely the one left open in
\cite{Volpe2026I}.

By tracing the argument of \cite{Donnelly2025} for the Pons Asinorum back through its
dependencies -- an exterior angle theorem, an alternate interior angle theorem, and a
crossing-angles configuration -- we found that every use of N2 in these three preliminary
lemmas is a use of the ordering theory of angles that N2 generates, and nothing more. Since
[\textbf{RCT}] already suffices, within $\MG^{-}$, to establish the same ordering theory
(trichotomy and transitivity of the angle order relation, established in
\cite{Volpe2026I}), each of these three lemmas can be re-proved from SAA and [\textbf{RCT}]
alone, without N2. The final step to [\textbf{PA}] itself, however, required more than a
direct substitution: \cite{Donnelly2025} leaves the corresponding argument largely
unjustified (see Section~\ref{PEN2_s2}), and adapting it to use only [\textbf{RCT}] led us to
a substantially restructured construction, detailed in Section~\ref{PEN2_s3}. The result is
nonetheless the one announced above: [\textbf{PA}] follows from SAA, [\textbf{RCT}], and
[\textbf{AB}] alone, with no appeal to N2 anywhere in the argument.

The remainder of this paper is organized as follows. Section~\ref{PEN2_s2} places this
result in relation to Donnelly's reconstruction, in particular contrasting the deductive
strength of [\textbf{RCT}] and N2 on the common basis of SAA and [\textbf{AB}], and to Donnelly's
earlier work on the same triangle congruence criteria in the metric setting of Birkhoff.
Section~\ref{PEN2_s3} contains the main results: the exterior angle theorem in its general
form, the alternate interior angle theorem, the crossing-angles lemma, and finally
[\textbf{PA}] itself, each derived from SAA and [\textbf{RCT}] alone (with [\textbf{AB}] entering
only in the last step). Section~\ref{PEN2_s4} draws the consequence for our main theorem and
updates the deductive equivalence of \cite{Volpe2026I} accordingly. We close with some
remarks on what remains open.

\section{Comparison with an Alternative Reconstruction from SAA}\label{PEN2_s2}

Donnelly's reconstruction \cite{Donnelly2025} takes place in the same base system
$\MG^{-}$, to which three new primitive axioms are added:
\begin{equation*}
\textrm{N1}=\textrm{SAA},\qquad
\textrm{N2}=\text{angle addition},\qquad
\textrm{N3}=\text{existence of angle bisectors}.
\end{equation*}
The main result there is
\begin{equation*}
\textrm{N1},\;\textrm{N2},\;\textrm{N3}\;\vdash\;\textrm{SAS},
\end{equation*}
obtained through a chain of intermediate lemmas that includes, in order: an ordering theory
for angles built from N2 alone; the existence of midpoints from N1 and N3; the Pons Asinorum
and its converse from N1, N2, and N3; the congruence of all right angles from N2 alone
(without N1 or N3); and finally SAS itself via a reflection argument.

On the shared basis of SAA and [\textbf{AB}] (which coincides with N1 and N3), the comparison
between [\textbf{RCT}] and N2 is therefore the crux of the matter. We record here what is
established, and what remains open, about their relative strength.

\begin{itemize}
\item N2 implies the ordering theory of angles (trichotomy, transitivity) that Donnelly uses
throughout his reconstruction. So does [\textbf{RCT}], within $\MG^{-}$, as established in
\cite{Volpe2026I}.
\item On the basis of N1 and N3, N2 implies [\textbf{PA}] \cite[Lemmas~17--18]{Donnelly2025}.
We show in Section~\ref{PEN2_s3} that, given SAA and [\textbf{AB}], [\textbf{RCT}] also
suffices to derive [\textbf{PA}].
\item On its own (without N1 or N3), N2 implies the congruence of all right angles
\cite[Lemma~19]{Donnelly2025}.
\end{itemize}

The specific route to [\textbf{PA}] in \cite{Donnelly2025} follows a citation trail that we
make fully explicit here. Lemmas 17 and 18 there (the converse and direct forms of the Pons
Asinorum) are stated with proofs said to be ``similar to the proofs given in
[\textbf{18}]'' -- referring to Donnelly's earlier paper \cite{Donnelly2010}, in the
continuous, Birkhoff-style setting -- without further detail. Tracing this reference, the
argument is given in full in \cite{Donnelly2010}: Lemma 5 there for the converse, and Lemma 6
for the Pons Asinorum itself, the latter built on a crossing-configuration argument (Lemma 4
there) essentially identical to our Theorem~\ref{PEN2_CA} below. Section~\ref{PEN2_s3}
reproduces this argument step by step, synthetically and within $\MG^{-}$, and replaces every
use of angle addition in it -- present in \cite{Donnelly2010} through the Protractor
Postulate, and inherited by \cite{Donnelly2025} through the citation just discussed -- with
[\textbf{RCT}] alone.

\begin{rem}
We note, finally, a point of exposition rather than of substance in \cite{Donnelly2025}. The
proof there of SAS (via reflections) relies on properties of reflections established by
appeal to Donnelly's earlier paper \cite{Donnelly2010}, stated to transfer ``without the use
of continuity''. Tracing this claim, the relevant argument in \cite{Donnelly2010} applies
SAA to two triangles by asserting, among other things, that two right angles located at
different points are congruent to each other -- a step left unjustified there, since in that
continuous setting any two right angles are congruent by definition. In the non-continuous
setting of \cite{Donnelly2025}, this same step requires a genuine theorem -- the congruence
of all right angles, \cite[Lemma~19]{Donnelly2025}. The overall argument is not circular,
since that lemma is established earlier in the paper and is therefore available at the point
where it is needed; but the text does not make explicit that this is precisely where it is
invoked, leaving the correspondence for the reader to reconstruct.
\end{rem}

\section{From SAA and RCT to the Pons Asinorum}\label{PEN2_s3}

We now give the reconstruction announced in the introduction. Throughout this section we work
in $\MG^{-}$ and assume SAA; [\textbf{AB}] is invoked only in Theorem~\ref{PEN2_PA} below.

\begin{thm}[\emph{Exterior angle theorem, general form}]\label{PEN2_EAT}
If we assume SAA and [\textbf{RCT}], then: given a triangle $\triangle ABC$ and a point $D$ such
that $A-C-D$, the exterior angle $\angle BCD$ is greater than each of the two remote interior
angles, that is, $\angle BCD>\angle BAC$ and $\angle BCD>\angle ABC$.
\end{thm}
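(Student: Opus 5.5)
The plan is to argue by contradiction, using the trichotomy and transitivity of the angle order established from [\textbf{RCT}] in \cite{Volpe2026I}. I treat only $\angle BCD>\angle BAC$. The inequality $\angle BCD>\angle ABC$ should follow by the same argument applied at vertex $B$, after passing to the vertical angle $\angle ACE$ with $B-C-E$. That reduction needs vertical angles to be congruent, which I take as available from \cite{Volpe2026I}. This is an assumption: if only [\textbf{SA}] yields it there, this part of the plan is unsupported.

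So suppose $\angle BCD\not>\angle BAC$. By trichotomy, either $\angle BCD\cong\angle BAC$ or $\angle BCD<\angle BAC$.

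\emph{Step 1: reduce the case $<$ to the case $\cong$.} If $\angle BCD<\angle BAC$, then [\textbf{RCT}] together with the definition of the order gives a ray $AB'$ in the interior of $\angle BAC$ with $\angle B'AC\cong\angle BCD$. By the crossbar theorem (available in $\MG^{-}$), ray $AB'$ meets segment $BC$ at a point $B''$. In triangle $\triangle AB''C$ we still have $A-C-D$, and the exterior angle at $C$ is $\angle B''CD=\angle BCD$, because $B''$ lies on ray $CB$. Hence $\angle B''CD\cong\angle B''AC$. It therefore suffices to show that no triangle has an exterior angle congruent to a remote interior angle.

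\emph{Step 2: refute the case $\cong$.} Assume $\angle BCD\cong\angle BAC$. On the side of line $AC$ opposite $B$, I construct a point $E$ with $\angle CAE\cong\angle ACB$ and $\angle ACE\cong\angle CAB$. The ray at $C$ is laid off by the angle-transport axiom. Since $\angle ACE\cong\angle BAC\cong\angle BCD$, and vertical angles are congruent, the uniqueness part of [\textbf{RCT}] should force ray $CE$ to be the ray opposite $CB$; so $E$ lies on line $BC$ beyond $C$.

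Next I choose $E$ on that ray so that $\triangle CEA$ corresponds to $\triangle ACB$: $E$ is where the transported ray at $A$ meets line $BC$. The existence of this intersection is part of the obstacle below. I then compare the angles at $A$. From $\angle CAE\cong\angle ACB$ and $\angle CAB\cong\angle BCD$, the sum $\angle BAC+\angle CAE$ is congruent to $\angle BCD+\angle BCA$, which is a straight angle. I intend to justify this comparison of sums purely through [\textbf{RCT}] and the order theory, without any angle-addition axiom. It then follows that $E$ lies on the ray opposite $AB$. Thus lines $AB$ and $CB$ both pass through $B$ and through $E\neq B$, so they coincide. This contradicts the non-collinearity of $A,B,C$.

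\emph{Main obstacle.} The hard part is Step~2: proving that the two transported rays actually meet on line $BC$, and that the "sum of angles" comparison can be carried out with [\textbf{RCT}] alone. If the intersection cannot be secured directly, my first fallback is to place $E$ on the ray opposite $CB$ (already justified by the vertical-angle remark) and to determine the angle $\angle CAE$ only afterwards. I would then identify that angle with $\angle ACB$ by the uniqueness in [\textbf{RCT}] combined with SAA on $\triangle ACB$ and $\triangle CAE$. There, $AC$ is the common side, $\angle ACE\cong\angle CAB$, and the pair of angles not adjacent to the matching side is controlled through trichotomy: each strict inequality would be shown to produce a smaller triangle of the forbidden type from Step~1.
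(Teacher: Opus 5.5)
\emph{Verdict: genuine gap.} Your Step 1 (shrinking $\angle BAC$ to $\angle BCD$ via Crossbar) matches the paper's proof. So does your derivation of $\angle BCD>\angle ABC$ through the vertical angle at $C$; the paper relies on vertical angles in exactly the same way. The gap is Step 2, which carries the whole content of the theorem and is not actually proved.

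First, nothing in $\MG^{-}$, SAA or [\textbf{RCT}] makes the ray from $A$ with $\angle CAE\cong\angle ACB$ (on the side of $\overleftrightarrow{AC}$ opposite $B$) meet $\overleftrightarrow{BC}$. This ray forms congruent alternate interior angles with $\overleftrightarrow{BC}$, so in the Euclidean plane it never meets that line, for any triangle. Hence $E$ could only be extracted from the contradictory hypothesis itself, by an argument you do not supply.

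Second, even granting $E$, passing from ``$\angle BAC+\angle CAE$ is a straight angle'' to ``$E$ lies on the ray opposite $\overrightarrow{AB}$'' needs more than you have. You need the supplement of $\angle CAB$ to be congruent to $\angle ACB$, which is the supplement of the congruent angle $\angle ACE$ at another vertex. That is the principle [\textbf{SA}], which is not a hypothesis here. [\textbf{RCT}] only yields addition of angles whose sum is again an angle, and a straight angle is not one.

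The fallback does not close the gap either. Any constructive choice of $E$ on the ray opposite $\overrightarrow{CB}$ (e.g.\ $CE\cong AB$) gives SAS data for $\triangle ACE$ and $\triangle CAB$, not SAA data; SAA would need $\angle AEC\cong\angle ABC$, which is what you are trying to obtain. Resolving the strict inequalities by producing ``a smaller triangle of the forbidden type'' is circular, because excluding that configuration is exactly the job of Step 2.

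The missing idea is that SAA applies directly to two overlapping triangles that share a vertex on $\overleftrightarrow{AC}$, with no auxiliary point across $\overleftrightarrow{AC}$. Suppose $\angle BCD\cong\angle BAC$ (or use your $B''$ in place of $B$ in the other case). Compare $\triangle BDA$ and $\triangle BDC$:
they have the common side $BD$; their angles at $D$ are the same angle, because $\overrightarrow{DA}=\overrightarrow{DC}$; and their angles opposite $BD$, namely $\angle BAD=\angle BAC$ and $\angle BCD$, are congruent by assumption. SAA then gives $DA\cong DC$, which contradicts $DC<DA$ from $A-C-D$. No supplements or angle sums are involved, and [\textbf{RCT}] enters only through trichotomy.
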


\begin{figure}[ht!]
\centering
\begin{tikzpicture}[scale=1.0,thin]
\coordinate[label=below:$A$] (A) at (0,0);
\coordinate[label=above:$B$] (B) at (1,3);
\coordinate[label=below:$C$] (C) at (4,0.5);
\coordinate[label=below:$D$] (D) at (6,0.75);
\coordinate[label=above:$K$] (K) at (2.5,1.75);

\draw[thick] (A)--(B)--(C)--cycle;
\draw (C)--(D);
\draw[densely dashed] (A)--(K);
\draw[densely dotted] (K)--(D);

\fill (A) circle (1.3pt);
\fill (B) circle (1.3pt);
\fill (C) circle (1.3pt);
\fill (D) circle (1.3pt);
\fill (K) circle (1.3pt);

\pic[draw,thin,angle radius=0.6cm] {angle=K--D--A};
\pic[draw,thin,angle radius=0.6cm] {angle=C--A--K};
\pic[draw,thin, angle radius=0.65cm] {angle=C--A--K};
\pic[draw,thin, angle radius=0.3cm] {angle=D--C--B};
\pic[draw,thin, angle radius=0.35cm] {angle=D--C--B};
\end{tikzpicture}
\caption{Theorem~\ref{PEN2_EAT} Schematic}
\label{PEN2_EAT_f}
\end{figure}

\begin{proof}
Given $\triangle ABC$ and $D$ such that $A-C-D$, suppose for contradiction that
$\angle BAC\geq\angle BCD$.

If $\angle BAC\equiv\angle BCD$, set $K=B$. If instead $\angle BAC>\angle BCD$, by [\textbf{C}4]
and Crossbar let $K$ be the unique point in $\mathrm{int}(BC)$ such that
$\angle KAC\equiv\angle BCD$.

Consider $\triangle KDA$ and $\triangle KDC$, with common side $KD$. Since $A-C-D$, the rays
$\overrightarrow{DA}$ and $\overrightarrow{DC}$ coincide, so trivially
$\angle KDA\equiv\angle KDC$. Since $K\in\mathrm{int}(BC)$ (or $K=B$),
$\overrightarrow{CK}=\overrightarrow{CB}$, hence $\angle KCD=\angle BCD$; and since
$\overrightarrow{AD}=\overrightarrow{AC}$, $\angle KAD=\angle KAC$. By construction
$\angle KAC\equiv\angle BCD$, hence $\angle KAD\equiv\angle KCD$.

The triangles $\triangle KDA$ and $\triangle KDC$ therefore share side $KD$, an angle at $D$,
and an angle at $A$/$C$: by SAA, $\triangle KDA\equiv\triangle KDC$, whence $DA\equiv DC$. But
$A-C-D$ gives $DA=DC+CA>DC$, a contradiction.

Hence $\angle BAC\geq\angle BCD$ is excluded, and by [\textbf{RCT}] (trichotomy of angles) we
conclude $\angle BCD>\angle BAC$.

A symmetric argument, taking $F$ such that $B-C-F$, gives $\angle ACF>\angle ABC$; since
$\angle BCD$ and $\angle ACF$ are vertical angles, $\angle BCD\equiv\angle ACF>\angle ABC$.
\end{proof}

\begin{rem}
Theorem~\ref{PEN2_EAT} is the general form of the exterior angle theorem, valid for any
triangle under the sole hypotheses SAA and [\textbf{RCT}]. In \cite{Volpe2026I} we developed a
special case of the exterior angle theorem for right triangles, which served as an
intermediate result (angle acuteness) used to derive SAS from SSS when SAA was not
available. Here we need the general form, from SAA and [\textbf{RCT}] directly, since the
present context is the SAA branch itself.
\end{rem}

\begin{thm}[\emph{Alternate interior angles}]\label{PEN2_AI}
If we assume SAA and [\textbf{RCT}], then: if two lines are cut by a transversal at two distinct
points so that a pair of alternate interior angles is congruent, the two lines are parallel.
\end{thm}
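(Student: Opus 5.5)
We argue by contradiction, as in the classical proof (Euclid I.27, Hilbert's Theorem 22), with Theorem~\ref{PEN2_EAT} doing the work that SAS does classically. Let the lines $\ell$ and $m$ be cut by a transversal $t$ at the distinct points $B\in\ell$ and $C\in m$. Choose $A\in\ell$ and $D\in m$ on opposite sides of $t$ so that $\angle ABC$ and $\angle DCB$ are the given congruent alternate interior angles, $\angle ABC\equiv\angle BCD$. Suppose for contradiction that $\ell$ and $m$ meet at a point $E$. Then $E\neq B$ and $E\neq C$: otherwise $\ell=m$ would contain both points, so it would coincide with the transversal $t$. Hence $E\notin t$ and $B,C,E$ form a triangle.

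Next we split according to the side of $t$ on which $E$ lies. Say $E$ is on the same side as $D$; the case with $A$ is symmetric after exchanging the roles of the two lines. Then $E$ lies on the ray $\overrightarrow{CD}$, since $E$ lies on $m$ on the $D$-side of $t$. Hence $\angle BCE=\angle BCD$. On line $\ell$, the point $E$ is on the opposite side of $t$ from $A$, so $A-B-E$ holds. Therefore $\angle ABC$ is the exterior angle of $\triangle EBC$ at the vertex $B$, and $\angle BCE$ is one of its remote interior angles. Theorem~\ref{PEN2_EAT}, applied to $\triangle EBC$ with $E-B-A$, gives $\angle ABC>\angle ECB=\angle BCD$. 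This contradicts $\angle ABC\equiv\angle BCD$, because [\textbf{RCT}] supplies trichotomy for the angle order, so a congruence and a strict inequality cannot both hold. Hence $\ell\parallel m$.

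The main obstacle is the betweenness bookkeeping, not the congruence argument. We must justify that $E$ falls on $\overrightarrow{CD}$ and that $A-B-E$ holds, using the plane separation axiom and the fact that a line meets $t$ in at most one point. For the symmetric case, with $E$ on the $A$-side, the exterior angle must be taken at $C$. There the relation $D-C-E$ makes $\angle DCB$ exterior to $\triangle EBC$, and $\angle EBC=\angle ABC$ is a remote interior angle. This is again a direct application of Theorem~\ref{PEN2_EAT}, with no need to invoke congruence of vertical angles or of supplements. That matters, since [\textbf{SA}] is not among the hypotheses here.
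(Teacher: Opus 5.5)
Your proof is correct and follows essentially the same route as the paper's: assume an intersection point, rule out its lying on the transversal, then apply Theorem~\ref{PEN2_EAT} at the vertex whose alternate angle lies on the side of the transversal opposite the intersection, and contradict the assumed congruence using the trichotomy supplied by [\textbf{RCT}]. One small slip: when excluding $E=B$ or $E=C$, you mean that $m$ (resp.\ $\ell$) would contain both $B$ and $C$ and hence coincide with $t$, not that $\ell=m$.
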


\begin{figure}[ht!]
\centering
\begin{tikzpicture}[scale=0.85,thin]
\coordinate[label=left:$A$] (A) at (-1.5,0.45);
\coordinate[label=right:$B$] (B) at (3,-0.9);
\coordinate[label=left:$C$] (C) at (-2,1.4);
\coordinate[label=right:$D$] (D) at (2.5,2.75);
\coordinate[label=below:$F$] (F) at (0,0);
\coordinate[label=above:$G$] (G) at (0,2);
\coordinate[label=left:$M$] (M) at (-3.33,1.0);

\draw[thick] (A)--(B);
\draw[thick] (C)--(D);
\draw (F)--(G);
\draw[densely dashed] (F)--(M);
\draw[densely dashed] (G)--(M);

\fill (A) circle (1.3pt);
\fill (B) circle (1.3pt);
\fill (C) circle (1.3pt);
\fill (D) circle (1.3pt);
\fill (F) circle (1.3pt);
\fill (G) circle (1.3pt);
\draw (M) circle (1.3pt);

\pic[draw,angle radius=0.5cm] {angle=G--F--A};
\pic[draw,angle radius=0.5cm] {angle=F--G--D};
\end{tikzpicture}
\caption{Theorem~\ref{PEN2_AI} Schematic: the dashed lines show the hypothetical
intersection $M$ used in the proof by contradiction.}
\label{PEN2_AI_f}
\end{figure}

\begin{proof}
Fix notation: let $\overleftrightarrow{AB}$ and $\overleftrightarrow{CD}$ be the two lines,
and $\overleftrightarrow{FG}$ the transversal, distinct from both, with
$F=\overleftrightarrow{AB}\cap\overleftrightarrow{FG}$ and
$G=\overleftrightarrow{CD}\cap\overleftrightarrow{FG}$ (necessarily $F\neq G$, since
$\overleftrightarrow{FG}$ is by definition the line determined by two distinct points, by
[\textbf{I}1]), with $A-F-B$, $C-G-D$, and $A,C$ on the same side of $\overleftrightarrow{FG}$.
Suppose the congruent alternate interior angles are $\angle AFG\equiv\angle DGF$.

Suppose for contradiction that $\overleftrightarrow{AB}$ and $\overleftrightarrow{CD}$ are
not parallel, i.e.\ $\overleftrightarrow{AB}\cap\overleftrightarrow{CD}=M$ for some point $M$.

\emph{Case $M\in\overleftrightarrow{FG}$.} Then $M$ is common to $\overleftrightarrow{AB}$
and $\overleftrightarrow{FG}$, distinct lines: by [\textbf{I}1], $M=F$. Likewise $M=G$. But
$F\neq G$: a contradiction.

\emph{Case $M\notin\overleftrightarrow{FG}$.} Without loss of generality, suppose $M$ is on
the same side of $\overleftrightarrow{FG}$ as $A$ and $C$ (the opposite case is entirely
symmetric, exchanging $A$ with $B$ and $C$ with $D$).

Consider $\triangle FMG$. Since $C-G-D$, $D$ lies on the ray opposite
$\overrightarrow{GC}$ from $G$; thus $\angle FGD$ is the exterior angle of $\triangle FMG$ at
$G$, with remote interior angle $\angle MFG$. Since $M$ is on the same side of
$\overleftrightarrow{FG}$ as $A$, and $M\in\overleftrightarrow{AB}$, we have
$\overrightarrow{FM}=\overrightarrow{FA}$, hence $\angle MFG=\angle AFG$.

By Theorem~\ref{PEN2_EAT}, $\angle FGD>\angle MFG=\angle AFG$. But
$\angle FGD=\angle DGF\equiv\angle AFG$ by hypothesis, contradicting
$\angle FGD>\angle AFG$ (by trichotomy, from [\textbf{RCT}], these cannot both hold).

In either case we reach a contradiction. Hence
$\overleftrightarrow{AB}\cap\overleftrightarrow{CD}=\emptyset$, i.e.\
$\overleftrightarrow{AB}\parallel\overleftrightarrow{CD}$.
\end{proof}

\begin{thm}[Crossing-angles lemma]\label{PEN2_CA}
If we assume SAA and [\textbf{RCT}], then: let $C,D$ be points on opposite sides of
$\overleftrightarrow{AB}$ such that $\angle ABC\equiv\angle BAD$, and let
$G=\overleftrightarrow{AB}\cap CD$. Then $A-G-B$. If moreover $AD\equiv BC$, then
$\triangle DAG\equiv\triangle CBG$ and $G$ is the midpoint of both $AB$ and $CD$.
\end{thm}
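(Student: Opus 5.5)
Since $C$ and $D$ lie on opposite sides of $\overleftrightarrow{AB}$, the segment $CD$ meets $\overleftrightarrow{AB}$ in a unique point $G$ (plane separation and [\textbf{I}1]; $G\neq C,D$ because neither lies on the line). To show $A-G-B$ I will exclude, in turn, $G=A$, $G=B$, $G-A-B$ and $A-B-G$. Each exclusion produces a triangle in which $\angle ABC$ or $\angle BAD$ appears as an exterior angle and the other, by the congruence $\angle ABC\equiv\angle BAD$, as a remote interior angle. Theorem~\ref{PEN2_EAT} then gives a strict inequality that contradicts trichotomy from [\textbf{RCT}]. An equivalent, slightly more economical route: the hypothesis says that $\overleftrightarrow{BC}$ and $\overleftrightarrow{AD}$, cut by the transversal $\overleftrightarrow{AB}$, have congruent alternate interior angles (since $C$ and $D$ are on opposite sides). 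By Theorem~\ref{PEN2_AI} they are parallel, and parallelism is what rules out the degenerate positions of $G$.

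\emph{Excluding $G=A$ and $G=B$.} If $G=A$, then $A\in CD$ with $C-A-D$. Hence $\overrightarrow{AD}$ is opposite to $\overrightarrow{AC}$, so $\angle BAD$ is the exterior angle at $A$ of $\triangle ABC$, with remote interior angle $\angle ABC$. Theorem~\ref{PEN2_EAT} gives $\angle BAD>\angle ABC$, contradicting the hypothesis. The case $G=B$ is symmetric, using $\triangle ABD$.

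\emph{Excluding $G-A-B$.} Here $\overrightarrow{AG}$ is opposite to $\overrightarrow{AB}$, and $\overrightarrow{BG}=\overrightarrow{BA}$, so $\angle GBC=\angle ABC$. In $\triangle GBC$, let $D'$ be a point with $G-A$ and $D'$ on the ray... The cleaner move is this: $D$ and $C$ lie on opposite sides of $\overleftrightarrow{AB}$, and $G\in CD$. In $\triangle AGD$, the angle $\angle BAD$ is the exterior angle at $A$, since $G-A-B$, with remote interior angle $\angle AGD$. In $\triangle GBC$, the angle $\angle ABC=\angle GBC$ is interior. I would instead apply Theorem~\ref{PEN2_EAT} to $\triangle GBC$, whose exterior angle at $G$ on the side of $D$ is $\angle AGD$, the vertical angle of $\angle BGC$... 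Bookkeeping this chain so that it lands on $\angle ABC<\angle BAD$ or the reverse is the step I expect to be the main obstacle, since the vertex labels must be matched carefully. The parallel-lines route avoids it: $\overleftrightarrow{AD}\parallel\overleftrightarrow{BC}$, so $D$ and $C$ each lie on a ray from $G$ leaving the line, and a side argument places $G$ strictly between $A$ and $B$. Concretely, if $G-A-B$, then $C$ and $G$ lie on the same side of $\overleftrightarrow{AD}$, because $\overline{BC}$ does not meet $\overleftrightarrow{AD}$ and neither does $\overline{BG}$. Hence $C$ and $D$... but $G\in CD$ with $G$ off $\overleftrightarrow{AD}$, which forces $C$ and $G$ onto the side of $\overleftrightarrow{AD}$ opposite to $D$'s ray. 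That contradicts the same-side claim via the segment $GB$ crossing $\overleftrightarrow{AD}$ at $A$, because $G-A-B$ makes $G$ and $B$ lie on opposite sides. The case $A-B-G$ is symmetric.

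\emph{The second part.} Assume $AD\equiv BC$. In $\triangle DAG$ and $\triangle CBG$ we have the following. The angles $\angle DAG$ and $\angle CBG$ are congruent: since $A-G-B$, $\overrightarrow{AG}=\overrightarrow{AB}$ and $\overrightarrow{BG}=\overrightarrow{BA}$, so these are exactly the given angles. The angles $\angle AGD$ and $\angle BGC$ are congruent as vertical angles, since $A-G-B$ and $D-G-C$. The sides satisfy $AD\equiv BC$, and each is opposite the vertical angle at $G$. SAA therefore yields $\triangle DAG\equiv\triangle CBG$, hence $AG\equiv BG$ and $DG\equiv CG$. Together with $A-G-B$ and $C-G-D$, this says that $G$ is the midpoint of both $AB$ and $CD$.
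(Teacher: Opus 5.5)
Your exclusions of $G=A$ and $G=B$ via Theorem~\ref{PEN2_EAT} are correct and coincide with the paper's proof. So is your second part (ray identities at $A$ and $B$, vertical angles at $G$, SAA on $AD\equiv BC$). The gap is in excluding $G-A-B$, and by your symmetry appeal $A-B-G$, which is where the real work of proving $A-G-B$ lies.

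You abandon the EAT attempt, and the parallel-lines argument you settle on is not valid as written. You justify ``$C$ and $G$ lie on the same side of $\overleftrightarrow{AD}$'' by asserting that $\overline{BG}$ does not meet $\overleftrightarrow{AD}$. Yet under the very hypothesis $G-A-B$, the segment $\overline{BG}$ contains $A$. The contradiction you then announce (``$GB$ crosses $\overleftrightarrow{AD}$ at $A$'') contradicts that false assertion, not the configuration. Also, ``the side of $\overleftrightarrow{AD}$ opposite to $D$'s ray'' has no meaning, since $D$ lies on $\overleftrightarrow{AD}$.

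What is missing is the correct reason for the same-side claim. The segment $\overline{CG}$ lies on $\overleftrightarrow{CD}$, which is distinct from $\overleftrightarrow{AD}$ (as $C\notin\overleftrightarrow{AD}$ by Theorem~\ref{PEN2_AI}) and so meets it only at $D$. And $D\notin\overline{CG}$ because $C-G-D$. With this the argument closes:
\begin{itemize}
\item $B$ and $C$ lie on the same side of $\overleftrightarrow{AD}$, by parallelism;
\item $B$ and $G$ lie on opposite sides, because $A\in\overline{BG}$;
\item so $C$ and $G$ are on opposite sides, a contradiction.
\end{itemize}
This is the plane-separation form of the paper's argument. In the case $A-B-G$, the paper applies Pasch to $\triangle AGD$ and the line $\overleftrightarrow{CB}$, discards $\overline{AD}$ by Theorem~\ref{PEN2_AI}, and obtains a second common point of $\overleftrightarrow{BC}$ and $\overleftrightarrow{CD}$.

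Alternatively, the EAT chain you dropped does close, and it bypasses Theorem~\ref{PEN2_AI}. If $G-A-B$, then $\angle BAD>\angle AGD$, since $\angle BAD$ is the exterior angle of $\triangle AGD$ at $A$. Next, $\angle AGD=\angle BGD>\angle GBC=\angle ABC$, since $\angle BGD$ is the exterior angle of $\triangle GBC$ at $G$: indeed $\overrightarrow{GA}=\overrightarrow{GB}$ and $\overrightarrow{GD}$ is opposite to $\overrightarrow{GC}$. Note that $\angle AGD$ is the supplement of $\angle BGC$, not its vertical angle as you wrote. Transitivity and trichotomy from [\textbf{RCT}] then give $\angle BAD>\angle ABC$, contradicting $\angle ABC\equiv\angle BAD$.
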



\begin{figure}[ht!]
\centering
\begin{tikzpicture}[scale=0.9,thin]
\coordinate[label=left:$A$] (A) at (0,0);
\coordinate[label=right:$B$] (B) at (7,0);
\coordinate[label=above:$C$] (C) at (6,2.3);
\coordinate[label=below:$D$] (D) at (0.6,-2.3);

\draw[thick,name path=AB] (A)--(B);
\draw[densely dashed,name path=CD] (C)--(D);
\path[name intersections={of=AB and CD, by=G}];
\node[label=above:$G$] at (G) {};

\draw (A)--(D);
\draw (B)--(C);

\fill (A) circle (1.3pt);
\fill (B) circle (1.3pt);
\fill (G) circle (1.3pt);
\fill (C) circle (1.3pt);
\fill (D) circle (1.3pt);

\pic[draw,angle radius=0.5cm] {angle=D--A--B};
\pic[draw,angle radius=0.6cm] {angle=D--A--B};
\pic[draw,angle radius=0.5cm] {angle=C--B--A};
\pic[draw,angle radius=0.6cm] {angle=C--B--A};
\pic[draw,angle radius=0.3cm] {angle=A--G--D};
\pic[draw,angle radius=0.3cm] {angle=B--G--C};

\draw ($(A)!0.5!(D)+(-0.10,0.07)$) -- ($(A)!0.5!(D)+(0.10,-0.07)$);
\draw ($(B)!0.5!(C)+(-0.10,0.07)$) -- ($(B)!0.5!(C)+(0.10,-0.07)$);
\end{tikzpicture}
\caption{Theorem~\ref{PEN2_CA} Schematic}
\label{PEN2_CA_f}
\end{figure}

\begin{proof}
\emph{Existence of $G$.} Since $C,D$ are on opposite sides of $\overleftrightarrow{AB}$, by
definition of opposite sides \cite[Definition p. 77]{Greenberg1993} the segment $CD$ meets
$\overleftrightarrow{AB}$ at a point $G$, and by construction $C-G-D$.

\emph{Exclusion of $G=A$.} If $G=A$, then $\angle BAD$ would be the exterior angle of
$\triangle ABC$ at $A$ with remote interior angle $\angle ABC$; by Theorem~\ref{PEN2_EAT},
$\angle BAD>\angle ABC$, contradicting $\angle ABC\equiv\angle BAD$. Hence $G\neq A$.

\emph{Exclusion of $G=B$.} Symmetrically, $G\neq B$.

\emph{Exclusion of $A-B-G$.} Suppose $A-B-G$, so $B\in\mathrm{int}(AG)$. By Pasch applied to
$\triangle AGD$, the line $\overleftrightarrow{CB}$, through the interior point $B$ of side
$AG$, must meet another side, $AD$ or $GD$. Since $\angle ABC\equiv\angle BAD$ are alternate
interior angles for the transversal $\overleftrightarrow{AB}$ cutting
$\overleftrightarrow{BC}$ and $\overleftrightarrow{AD}$ at the distinct points $B,A$,
Theorem~\ref{PEN2_AI} gives $\overleftrightarrow{BC}\parallel\overleftrightarrow{AD}$. So
$\overleftrightarrow{CB}$ meets $GD$ at a point $N$ with $G-N-D$.

Since $C-G-D$, $C\notin\mathrm{int}(GD)$, so $C\neq N$. But $C$ is also common to
$\overleftrightarrow{BC}$ and $\overleftrightarrow{GD}=\overleftrightarrow{CD}$; two distinct
common points force $\overleftrightarrow{BC}=\overleftrightarrow{CD}$, so
$D\in\overleftrightarrow{BC}$ -- contradicting $\overleftrightarrow{BC}\parallel
\overleftrightarrow{AD}$ together with $D\in\overleftrightarrow{AD}$ (as $A,B,C$ are not
collinear). Hence $A-B-G$ is excluded.

\emph{Exclusion of $G-A-B$.} Symmetric argument.

\emph{Conclusion.} With $G=A$, $G=B$, $A-B-G$, $G-A-B$ all excluded, and by [\textbf{O}3]
(trichotomy of betweenness), $A-G-B$.

\emph{Second part.} Assume $AD\equiv BC$. Since $A-G-B$ and $C-G-D$, $\angle AGD$ and
$\angle CGB$ are vertical angles, hence congruent. Since $G\in\mathrm{int}(AB)$,
$\overrightarrow{AG}=\overrightarrow{AB}$, so $\angle DAG=\angle BAD$; and since
$G\in\mathrm{int}(CD)$, $\overrightarrow{BG}=\overrightarrow{BA}$, so $\angle CBG=\angle ABC$.
By hypothesis $\angle ABC\equiv\angle BAD$, hence $\angle DAG\equiv\angle CBG$.

The triangles $\triangle DAG,\triangle CBG$ share: side $DA\equiv CB$, an angle at $G$
(vertical angles), and an angle at $A$/$B$ (just shown): by SAA,
$\triangle DAG\equiv\triangle CBG$. Hence $AG\equiv BG$, $DG\equiv CG$, so $G$ is the
midpoint of both $AB$ and $CD$.
\end{proof}
\begin{lem}[\emph{Converse Pons Asinorum}]\label{PEN2_PA_CONV}
If we assume [\textbf{AB}], and $\angle BAC\equiv\angle BCA$, then $BA\equiv BC$.
\end{lem}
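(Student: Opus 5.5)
The converse follows from a single application of SAA after dropping the angle bisector from the apex; no earlier theorem of this section is needed. SAA is in force throughout the section, so I use it freely. Given $\triangle ABC$ with $\angle BAC\equiv\angle BCA$, I invoke [\textbf{AB}] to obtain the bisector of $\angle ABC$: a ray $\overrightarrow{BE}$ with $E$ in the interior of $\angle ABC$ and $\angle ABE\equiv\angle EBC$. By the Crossbar theorem this ray meets the opposite side at a point $K$ with $A-K-C$. Since $\overrightarrow{BK}=\overrightarrow{BE}$, we have $\angle ABK\equiv\angle KBC$.

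I then compare $\triangle ABK$ and $\triangle CBK$. They share the side $BK$, and they have congruent angles at $B$ by the bisector property. Their angles at $A$ and at $C$ are also congruent: because $A-K-C$, we have $\overrightarrow{AK}=\overrightarrow{AC}$ and $\overrightarrow{CK}=\overrightarrow{CA}$. Hence $\angle BAK=\angle BAC$ and $\angle BCK=\angle BCA$, and these are congruent by hypothesis.

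The common side $BK$ is opposite the angle at $A$ in one triangle and the angle at $C$ in the other. The correspondence is $B\leftrightarrow B$, $A\leftrightarrow C$, $K\leftrightarrow K$, with the angle at $B$ and the angle at $A$ (respectively $C$) congruent and the side $BK$, opposite the latter, equal. This is exactly the SAA configuration, with the side not included between the two angles. SAA then gives $\triangle ABK\equiv\triangle CBK$, and in particular $BA\equiv BC$.

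The only step needing care is the Crossbar step, since the foot $K$ must lie strictly between $A$ and $C$ for the ray identifications at $A$ and $C$ to hold. Crossbar guarantees this because $E$ is interior to $\angle ABC$. No use of [\textbf{RCT}] or of Theorems~\ref{PEN2_EAT}--\ref{PEN2_CA} is required here; those results will matter only for the direct form of [\textbf{PA}].
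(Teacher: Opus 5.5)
Your proof is correct and follows the paper's argument essentially verbatim: you bisect $\angle ABC$ via [\textbf{AB}], place the foot $K\in\mathrm{int}(AC)$ by Crossbar, use the ray identities at $A$ and $C$, and apply SAA to $\triangle ABK$ and $\triangle CBK$ with common side $BK$. Your explicit justification of the Crossbar step and of $BK$ being the non-included side is slightly more careful than the paper's, but the route is the same.
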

\begin{proof}
By [\textbf{AB}], let $K\in\mathrm{int}(AC)$ be the unique point such that
$\overrightarrow{BK}$ bisects $\angle ABC$; hence $\angle ABK\equiv\angle CBK$.

Since $K\in\mathrm{int}(AC)$, $\overrightarrow{AK}=\overrightarrow{AC}$ and
$\overrightarrow{CK}=\overrightarrow{CA}$, so $\angle BAK=\angle BAC$ and
$\angle BCK=\angle BCA$. By hypothesis $\angle BAC\equiv\angle BCA$, hence
\begin{equation*}
\angle BAK\equiv\angle BCK.
\end{equation*}

Consider $\triangle KBA$ and $\triangle KBC$: they share side $BK$, have
$\angle ABK\equiv\angle CBK$ (the angle at $B$), and $\angle BAK\equiv\angle BCK$ (the angle
at $A$/$C$, just shown). By SAA -- side $BK$, angle at $B$, angle at $A$/$C$ -- we obtain
$\triangle KBA\equiv\triangle KBC$, hence $BA\equiv BC$.
\end{proof}

\begin{thm}\label{PEN2_PA}
If we assume SAA, [\textbf{RCT}], and [\textbf{AB}], then [\textbf{PA}] follows: if $BA\equiv BC$, then
$\angle BAC\equiv\angle BCA$.
\end{thm}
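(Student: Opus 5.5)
The plan is to argue by contradiction, using trichotomy from [\textbf{RCT}] and the crossing-angles Theorem~\ref{PEN2_CA}, with Lemma~\ref{PEN2_PA_CONV} as the tool that turns angle information back into side information. The construction and the first consequences below are straightforward; the final contradiction is the part I have not closed, and I say below what I would try.

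\emph{Setup.} Let $BA\equiv BC$ and suppose $\angle BAC\not\equiv\angle BCA$. By trichotomy, and by relabelling $A$ and $C$ if necessary, we may assume $\angle BAC>\angle BCA$. By [\textbf{C}4] and Crossbar, choose $E\in\mathrm{int}(BC)$ with $\angle EAC\equiv\angle BCA$. Then $\angle EAC\equiv\angle ECA$, so Lemma~\ref{PEN2_PA_CONV} gives $EA\equiv EC$. I keep this as auxiliary information for the end of the argument.

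\emph{Main construction.} On the side of $\overleftrightarrow{AC}$ opposite to $B$, choose $D$ with $\angle CAD\equiv\angle ACB$ and $AD\equiv CB$, using [\textbf{C}4] and [\textbf{C}1]. Theorem~\ref{PEN2_CA}, applied to the line $\overleftrightarrow{AC}$ and the points $B,D$, gives $G=\overleftrightarrow{AC}\cap BD$ with $A-G-C$. It also gives $\triangle DAG\equiv\triangle BCG$, with $G$ the midpoint of both $AC$ and $BD$.

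Symmetrically, choose $D'$ on the same side as $D$ with $\angle ACD'\equiv\angle CAB$ and $CD'\equiv AB$. Theorem~\ref{PEN2_CA} gives $G'$ as the midpoint of both $AC$ and $BD'$. Midpoints are unique (order axioms and [\textbf{C}3]), so $G'=G$. Since $D$ and $D'$ both lie on the ray opposite $\overrightarrow{GB}$ with $GD\equiv GB\equiv GD'$, [\textbf{C}1] forces $D=D'$.

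\emph{Consequences.} The quadrilateral $ABCD$ then has
\begin{equation*}
AD\equiv CB\equiv AB\equiv CD,\qquad \angle CAD\equiv\angle ACB,\qquad \angle ACD\equiv\angle CAB .
\end{equation*}
So $\triangle ADC$ is again isosceles with apex $D$, but with its base-angle inequality reversed: $\angle DCA>\angle DAC$. The two congruences from Theorem~\ref{PEN2_CA} add $\angle ADG\equiv\angle CBG$ and $\angle CDG\equiv\angle ABG$.

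\emph{Expected obstacle: the final contradiction.} The reversal of the base-angle inequality is not by itself contradictory, so something more is needed. My first attempt would be the following.
\begin{enumerate}
\item Transport the point $E$ to $\triangle ADC$. The angle correspondences above yield a point $E'\in\mathrm{int}(AD)$ with $\angle E'CA\equiv\angle E'AC$, hence $E'A\equiv E'C$ by Lemma~\ref{PEN2_PA_CONV}.
\item Apply Theorem~\ref{PEN2_CA} to the line $\overleftrightarrow{AC}$ and the points $E,E'$. Here $\angle ECA\equiv\angle E'AC$ by construction, so $EE'$ crosses $AC$ at its midpoint $G$.
\item Conclude that $G$ lies on both $BD$ and $EE'$, which are distinct lines. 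This configuration should then be contradicted by Theorem~\ref{PEN2_EAT}, applied to the exterior angle at $G$ in $\triangle BGE$ (or, alternatively, by Theorem~\ref{PEN2_AI} applied to $\overleftrightarrow{BC}\parallel\overleftrightarrow{AD}$).
\end{enumerate}

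Making this last step rigorous without any form of SAS is where I expect the real difficulty. If it fails, my fallback is to bring in the bisector of $\angle ABC$ from [\textbf{AB}] and compare its foot with $G$.
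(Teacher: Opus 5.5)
There is a genuine gap: your proposal never reaches a contradiction, and the completion you sketch would not work. In step~2, Theorem~\ref{PEN2_CA} applied to $E,E'$ gives only that $EE'$ meets $AC$ at some $H$ with $A-H-C$. To identify $H$ with the midpoint $G$ you need the theorem's second hypothesis, $AE'\equiv CE$, and you have not shown it. The natural route to it is $\triangle AEC\equiv\triangle CE'A$. But there the known side $AC$ lies between the two known angles, which is an ASA configuration, and ASA is not available.

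In step~3, even granting $G\in BD\cap EE'$, there is nothing to contradict. That incidence is exactly what the central symmetry about $G$ ($B\leftrightarrow D$, $C\leftrightarrow A$, $E\leftrightarrow E'$) predicts. Theorems~\ref{PEN2_EAT} and~\ref{PEN2_AI} yield only angle inequalities and parallelisms that are consistent with it.

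The real content of your point $E$ is metric. From $BA\equiv BC$, $B-E-C$ and $EC\equiv EA$ you get that $BA$ is ``equal to'' $BE+EA$. The contradiction there is the strict triangle inequality in $\triangle ABE$, whose usual proof rests on [\textbf{PA}] itself. So the route through $E$ is circular unless you find a new mechanism.

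What is missing is a way to get a contradiction using only Lemma~\ref{PEN2_PA_CONV} and part--whole comparisons of segments. The paper gets it by centring the construction at the foot $K$ of the bisector of $\angle ABC$, not at the midpoint of $AC$. Your fallback points this way, but it is the whole proof.

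\begin{enumerate}
\item Reflect $B$ through $K$ to get $D$. Take $F$ on $A$'s side of $\overleftrightarrow{BD}$ with $\angle BDF\equiv\angle DBC$ and $DF\equiv BC$.
\item Theorem~\ref{PEN2_CA}, together with uniqueness of midpoints, gives $F-K-C$, $K$ the midpoint of $FC$, and $\triangle FDK\equiv\triangle CBK$.
\item Suppose $F\neq A$, say $A-F-K$ (the other case is symmetric). Then $\overrightarrow{DF}$ meets $AB$ at $M$ with $D-F-M$. By the bisector, $\angle MDB\equiv\angle DBC\equiv\angle ABD=\angle MBD$, so Lemma~\ref{PEN2_PA_CONV} gives $MD\equiv MB$. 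Yet $MD>FD\equiv BC\equiv AB>MB$, a contradiction. Hence $F=A$.
\item With $F=A$, the congruence gives $\angle ADB\equiv\angle CBK\equiv\angle ABD$.
\item By SAA, the bisector of $\angle DAB$ meets $BD$ at its midpoint, so it passes through $K$. This yields $\angle BAC\equiv\angle DAC\equiv\angle BCA$.
\end{enumerate}

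This is a direct proof. Neither the assumption $\angle BAC>\angle BCA$ nor the auxiliary points $E,E'$ are needed.
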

\begin{figure}[ht!]
\centering
\begin{tikzpicture}[scale=1.6,thin]
\coordinate[label=left:$A$] (A) at (-2,0);
\coordinate[label=above:$B$] (B) at (0,1.2);
\coordinate[label=right:$C$] (C) at (2,0);
\coordinate[label=below:$D$] (D) at (0,-1.2);
\coordinate[label=below right:$K$] (K) at (0,0);
\coordinate[label=below:$F$] (F) at (-1.2,0);
\coordinate[label=above left:$M$] (M) at (-1.5,0.3);

\draw[thick] (A)--(B)--(C)--cycle;
\draw[densely dashed] (B)--(D);
\draw[densely dashed] (A)--(D)--(C);
\draw[densely dotted] (M)--(D);

\fill (A) circle (1.3pt);
\fill (B) circle (1.3pt);
\fill (C) circle (1.3pt);
\fill (D) circle (1.3pt);
\fill (K) circle (1.3pt);
\fill (F) circle (1.3pt);
\fill (M) circle (1.3pt);

\pic[draw,angle radius=0.5cm] {angle=D--A--C};
\pic[draw,angle radius=0.5cm] {angle=B--C--A};
\pic[draw,angle radius=0.35cm] {angle=D--B--C};
\pic[draw,angle radius=0.40cm] {angle=D--B--C};
\pic[draw,angle radius=0.45cm] {angle=D--B--C};
\pic[draw,angle radius=0.35cm] {angle=B--D--F};
\pic[draw,angle radius=0.40cm] {angle=B--D--F};
\pic[draw,angle radius=0.45cm] {angle=B--D--F};
\pic[draw,angle radius=0.70cm] {angle=A--C--D};
\pic[draw,angle radius=0.66cm] {angle=A--C--D};
\pic[draw,angle radius=0.70cm] {angle=C--A--B};
\pic[draw,angle radius=0.66cm] {angle=C--A--B};

\PerpMark{A}{B}{0.5}{0.05}{thin}
\PerpMark{B}{C}{0.5}{0.05}{thin}

\coordinate[label=above left:$I$] (I) at (0,+0.25);
\fill (I) circle (1.3pt);
\draw[loosely dashed] (A)--(I);
\end{tikzpicture}
\caption{Theorem~\ref{PEN2_PA} Schematic.}
\label{PEN2_PA_f}
\end{figure}
\begin{proof}
\emph{(1) Midpoint $K$ and its reflection $D$.} By [\textbf{AB}], the bisector of
$\angle ABC$ exists; by \cite[Crossbar Theorem, p.~82]{Greenberg1993}, let $K\in\mathrm{int}(AC)$
be the point where it meets $AC$, so that $\overrightarrow{BK}$ bisects $\angle ABC$. By
[\textbf{C}1], let $D$ be the unique point such that $K$ is the midpoint of $BD$.

\emph{(2) Auxiliary point $F$.} By [\textbf{C}4] and [\textbf{C}1], let $F$ be the point, on the same
side of $\overleftrightarrow{BD}$ as $A$, such that $\angle BDF\equiv\angle DBC$ and
$DF\equiv BC$.

\emph{(3) Applying the crossing-angles lemma (Theorem~\ref{PEN2_CA}).} Since $K\in\mathrm{int}(AC)$
(step 1, via Crossbar), $\overleftrightarrow{BD}$ meets segment $AC$ at the interior point
$K$, so $A$ and $C$ are on opposite sides of $\overleftrightarrow{BD}$. Since $F$ and $A$ are
on the same side of $\overleftrightarrow{BD}$ (step 2), by
\cite[Corollary~(iii), p.~77]{Greenberg1993}, $F$ and $C$ are on opposite sides of
$\overleftrightarrow{BD}$. The configuration $(B,D,F,C)$ with respect to
$\overleftrightarrow{BD}$ thus satisfies the hypotheses of Theorem~\ref{PEN2_CA}
($\angle DBC\equiv\angle BDF$, $DF\equiv BC$).

By Theorem~\ref{PEN2_CA} applied to this configuration (with the correspondence
$B\leftrightarrow A$, $D\leftrightarrow B$, $F\leftrightarrow C$, $C\leftrightarrow D$ to its
own statement), there exists a point $G=\overleftrightarrow{BD}\cap FC$ with $B-G-D$, such
that $G$ is the midpoint of both $BD$ and $FC$, and $\triangle FDG\equiv\triangle CBG$.

Since $K$ is also the midpoint of $BD$ (step 1): $BK\equiv KD$ with $B-K-D$, and
$BG\equiv GD$ with $B-G-D$, so both $K$ and $G$ lie on ray $\overrightarrow{BD}$ at the same
distance from $B$ (half of $BD$); by [\textbf{C}1] (uniqueness of the point on a ray at a
given distance), $G=K$. Hence
\begin{equation*}
F-K-C\qquad\text{and}\qquad\triangle FDK\equiv\triangle CBK.
\end{equation*}

\emph{(4) $F=A$, by contradiction.} Suppose $F\neq A$. Since $K\in AC$ and $F-K-C$, $F$ lies
on $\overleftrightarrow{AC}$, so $A,F,K,C$ are collinear. Suppose, for contradiction,
$A-K-F$. Then segment $AF$ meets $\overleftrightarrow{BD}$ at $K$, so $A$ and $F$ would be on
opposite sides of $\overleftrightarrow{BD}$, contradicting step (2). Hence $A-K-F$ is excluded,
and by [\textbf{O}3] applied to the three distinct collinear
points $A,F,K$, either $A-F-K$ or $F-A-K$ holds. Hence either $A-F-K-C$ or $F-A-K-C$.

The configuration preceding the construction of $F$ -- namely $K$, $D$, and the hypothesis
$BA\equiv BC$ -- is symmetric under exchanging $A$ and $C$. The case $F-A-K-C$ therefore
reduces to the case $A-F-K-C$ by relabeling $A\leftrightarrow C$ (with $F$ replaced by the
point constructed via step (2) with $A$ in place of $C$). We may thus assume WLOG
$A-F-K-C$.

Since $A-F-K$, ray $\overrightarrow{DF}$ lies in the interior of $\angle ADB$ (as $B-K-D$
gives $\overrightarrow{DK}=\overrightarrow{DB}$). By Crossbar \cite[p.~82]{Greenberg1993},
$\overrightarrow{DF}$ meets segment $AB$ at a point $M$ such that $A-M-B$.

By construction $\angle MDB\equiv\angle MBD$ in $\triangle DMB$ by Lemma~\ref{PEN2_PA_CONV}, $MD\equiv MB$.

But $M-F-D$ gives $MD>FD$, and $A-M-B$ gives $AB>MB$. So
\begin{equation*}
MD>FD\equiv BC\equiv AB>MB\equiv MD,
\end{equation*}
a contradiction. Hence $F=A$.

\emph{(5) Second reflection point $F'$.} Since $F=A$, step (3) also gives $AK\equiv CK$. By
[\textbf{C}1], let $F'$ be the unique point on ray $\overrightarrow{KC}$ such that
$KF'\equiv KA$; then $KF'\equiv KA\equiv KC$, and since $F'$ lies on ray
$\overrightarrow{KC}$ at this distance, uniqueness (again [\textbf{C}1]) gives $F'\equiv C$.

\emph{(6) Symmetry between $\triangle ABC$ and $\triangle CDA$.} Replicating, starting from
$\triangle CDA$, the same construction that led from $\triangle ABC$ to $\triangle CDA$, we
obtain $\triangle ABC$ again. Both triangles are isosceles; $BD$ is the bisector of the
angles at $B$ and at $D$, and the corresponding half-angles are congruent. Moreover
$AC\equiv AC$, $AB\equiv CD$, $BC\equiv DA$, and $\angle BCA\equiv\angle CAD$,
$\angle BAC\equiv\angle ACD$.

\emph{(7) Second bisector and conclusion.} Note that $\angle ABD=\angle ABK$ and
$\angle ADB=\angle ADK$ (ray identities, $K$ being on $BD$), so the congruent half-angles of
step (6) already give $\angle ABD\equiv\angle ADB$.

By [\textbf{AB}], let $I\in\mathrm{int}(BD)$ be such that $\overrightarrow{AI}$ bisects
$\angle DAB$. Consider $\triangle AIB$ and $\triangle AID$: they share side $AI$, have
$\angle IAB\equiv\angle IAD$ (bisector, by construction), and, since
$\overrightarrow{BI}=\overrightarrow{BD}$ and $\overrightarrow{DI}=\overrightarrow{DB}$,
$\angle ABI=\angle ABD\equiv\angle ADB=\angle ADI$ (just shown). By SAA -- side $AI$, angle
at $A$, angle at $B$/$D$ -- $\triangle AIB\equiv\triangle AID$, so $BI\equiv DI$: $I$ is the
midpoint of $BD$. Since $BI\equiv ID$ and $BK\equiv KD$, with $B-I-D$ and $B-K-D$, both $I$
and $K$ lie on ray $\overrightarrow{BD}$ at the same distance from $B$; by [\textbf{C}1]
(uniqueness of the point on a ray at a given distance), $I=K$. Hence $\overrightarrow{AK}$
bisects $\angle DAB$:
\begin{equation*}
\angle DAK\equiv\angle BAK,\qquad\text{i.e.}\qquad \angle DAC\equiv\angle BAC.
\end{equation*}

Combined with $\angle BCA\equiv\angle DAC$ (step 6), this gives
\begin{equation*}
\angle BAC\equiv\angle DAC\equiv\angle BCA,
\end{equation*}
which is [\textbf{PA}].
\end{proof}

\section{Consequence: an improved reconstruction}\label{PEN2_s4}

\begin{cor}\label{PEN2_MAIN}
In $\MG^{-}$,
\begin{equation*}
\textrm{SAA},\ [\textbf{RCT}],\ [\textbf{AB}],\ [\textbf{SA}]\;\vdash\;\textrm{SAS}.
\end{equation*}
\end{cor}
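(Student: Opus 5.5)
The corollary follows by combining Theorem~\ref{PEN2_PA} with the main result of \cite{Volpe2026I}, recalled in \eqref{PEN2_ESAA_OLD}. That result states that in $\MG^{-}$ the hypotheses SAA, [\textbf{RCT}], [\textbf{AB}], [\textbf{SA}], [\textbf{PA}] yield SAS. The plan is to discharge the hypothesis [\textbf{PA}] from that list, using only the remaining four hypotheses.

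Concretely, I proceed in two steps. First, working in $\MG^{-}$ under SAA, [\textbf{RCT}] and [\textbf{AB}], I invoke Theorem~\ref{PEN2_PA} to obtain [\textbf{PA}] as a derived theorem. Second, I note that adding the extra hypothesis [\textbf{SA}] cannot invalidate this derivation. Monotonicity of derivability gives
\begin{equation*}
\textrm{SAA},\ [\textbf{RCT}],\ [\textbf{AB}],\ [\textbf{SA}] \;\vdash\; [\textbf{PA}].
\end{equation*}
The cut rule then applies: we have $\Gamma\vdash\varphi$ and $\Gamma,\varphi\vdash\psi$, with $\Gamma$ consisting of SAA, [\textbf{RCT}], [\textbf{AB}], [\textbf{SA}], $\varphi$ being [\textbf{PA}], and $\psi$ being SAS. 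Together these give $\Gamma\vdash\psi$, which is the claim.

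The only point needing care is a compatibility check rather than a real obstacle. The base system and the formulations of SAA, [\textbf{RCT}], [\textbf{AB}] used in Section~\ref{PEN2_s3} must coincide with those in \cite{Volpe2026I}. In particular, the proof of Theorem~\ref{PEN2_PA} and its supporting results (Theorems~\ref{PEN2_EAT}, \ref{PEN2_AI}, \ref{PEN2_CA} and Lemma~\ref{PEN2_PA_CONV}) must use only axioms of $\MG^{-}$, never [\textbf{C}6] or a principle derived from it. Likewise, the trichotomy and transitivity of the angle order must have been derived in \cite{Volpe2026I} from [\textbf{RCT}] inside $\MG^{-}$ alone. I would scan those proofs to confirm that every appeal is to [\textbf{C}1], [\textbf{C}4], Crossbar, Pasch, the plane-separation corollaries, or the stated hypotheses. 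Once this is confirmed there is no circularity, and the corollary follows immediately.
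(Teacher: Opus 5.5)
Your proposal is correct and is the paper's proof: obtain [\textbf{PA}] from Theorem~\ref{PEN2_PA}, then apply the Part~I result \eqref{PEN2_ESAA_OLD} (case (ii) of Theorem~3.11 of \cite{Volpe2026I}) with [\textbf{PA}] discharged as a derived lemma. Your proposed scan would turn up one thing: the proofs of Theorems~\ref{PEN2_EAT} and~\ref{PEN2_CA} use congruence of vertical angles, which in $\MG^{-}$ is naturally justified by [\textbf{SA}], a hypothesis not listed in Theorem~\ref{PEN2_PA}; since your $\Gamma$ contains [\textbf{SA}], the corollary is unaffected.
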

\begin{proof}
By Theorem~\ref{PEN2_PA}, [\textbf{PA}] follows from SAA, [\textbf{RCT}], and [\textbf{AB}]. Case
(ii) of Theorem 3.11 of \cite[Section~3]{Volpe2026I} then applies with
[\textbf{PA}] discharged
as a derived lemma rather than assumed as a hypothesis.
\end{proof}

This improves the deductive equivalence \eqref{PEN2_ESAA_OLD}: [\textbf{PA}] is no longer
needed as an independent hypothesis, and the equivalence of \cite{Volpe2026I} accordingly
strengthens to
\begin{equation}\label{PEN2_ESAA_NEW}
\textrm{SAS}\;\dashv\vdash\;\textrm{SAA},\ [\textbf{RCT}],\ [\textbf{AB}],\ [\textbf{SA}].
\end{equation}

\section{Concluding remarks}\label{PEN2_s5}

The result of this paper closes one of the two questions left open in
\cite{Volpe2026I}: [\textbf{PA}] is not an independent requirement alongside SAA,
[\textbf{RCT}], [\textbf{AB}], and [\textbf{SA}] in our reconstruction of SAS -- it is a
consequence of the first three. The corresponding question on the SSS branch is taken up
below.

A further consequence of eliminating [\textbf{PA}] concerns the security of the SAA
reconstruction itself. The concluding remarks of \cite{Volpe2026I} noted that, among the
three reconstructions, only ASA could be shown to rest on ``a distinctly more secure
footing'': since $\mathbb{E}_{H}^{2}$ (the model of \cite[Section~4]{Volpe2026I}) satisfies
[\textbf{RCT}] but not ASA, the soundness
theorem gives $[\textbf{RCT}]\not\vdash\textrm{ASA}$ in $\MG^{-}$, so the reconstruction
genuinely requires ASA, not merely [\textbf{RCT}] in disguise. No analogous certainty was
available for SAA, because $\mathbb{E}_{H}^{2}$ fails [\textbf{PA}] -- one of the SAA
reconstruction's own hypotheses -- and so could not be used to test whether SAA itself is
required beyond its auxiliary principles.

With [\textbf{PA}] eliminated by Theorem~\ref{PEN2_PA}, this obstruction disappears: the
hypothesis set of Corollary~\ref{PEN2_MAIN} reduces to SAA, [\textbf{RCT}], [\textbf{AB}],
[\textbf{SA}], and $\mathbb{E}_{H}^{2}$ satisfies all three auxiliary members while failing
SAA itself. By the same soundness argument used for ASA,
$[\textbf{RCT}],[\textbf{AB}],[\textbf{SA}]\not\vdash\textrm{SAA}$ in $\MG^{-}$: the
reconstruction of Corollary~\ref{PEN2_MAIN} genuinely requires SAA, not merely its auxiliary
principles in disguise. The SAA reconstruction now stands on exactly the same secure footing
as the ASA reconstruction.

This raises the natural question of whether the SSS reconstruction could be brought to the
same footing by an analogous elimination of [\textbf{HA}]. Unlike [\textbf{PA}],
[\textbf{HA}] does not appear to be a consequence of the principles already assumed alongside
it (SSS, [\textbf{RCT}], [\textbf{MS}]); no argument of the kind given in
Section~\ref{PEN2_s3} for [\textbf{PA}] is available to us here. The relevant question is
therefore not whether [\textbf{HA}] is redundant among the current hypotheses, but whether it
could be \emph{replaced} by some as-yet-unidentified principle $X$ -- provable in $\MG$, and, crucially,
satisfied by $\mathbb{E}_{H}^{2}$ -- such that SSS, [\textbf{RCT}], [\textbf{MS}], $X$ still
suffices to derive SAS. Since $\mathbb{E}_{H}^{2}$ fails SSS (and SAS) outright, any such $X$
would, by the same soundness argument used above, certify that
[\textbf{RCT}], [\textbf{MS}], $X\not\vdash$ SSS in $\MG^{-}$, placing the SSS reconstruction
on the same secure footing as the other two. We have not identified such an $X$, and we do
not know whether one exists; we record the question here as a direction for future work.

We do not believe [\textbf{SA}] can be eliminated in turn: it enters our reconstruction
through Theorem 3.2 of \cite[Section~3]{Volpe2026I} (the congruence of all right
angles), which is
what makes [\textbf{HA}] applicable to right triangles with distinct vertices -- as it is in
the proof of the main theorem, where the feet $P$ and $Q$ of the two perpendiculars belong to
different triangles. Without [\textbf{SA}], this comparison is not available, and we see no
route around it.

Finally, we have not investigated in this paper whether the converse substitution is
possible, that is, whether [\textbf{RCT}], [\textbf{SA}], and [\textbf{AB}] together suffice
to derive the angle addition axiom N2 of \cite{Donnelly2025}. If they did, [\textbf{RCT}] and
N2 would be capable of establishing the same results on this shared basis, rather than
merely overlapping in consequences, and the asymmetry we have described in
Section~\ref{PEN2_s2} -- that the reconstruction of \cite{Donnelly2025} can borrow ours, but
not evidently the reverse -- would need to be reconsidered.


\end{document}